\newcommand{\drbl}[2]{\draw [thin](V#1)--(V#2);}
\newcommand{\drwl}[2]{\draw[thin,dotted] (V#1)--(V#2);}
\newcommand{\Z}{{\mathbb Z}}
\newcommand{\cS}{{\cal S}}
\newtheorem{theorem}{Theorem}
\newtheorem{lemma}{Lemma}
\newtheorem{proposition}{Proposition}
\bmdefine{\Bt}{t}
\bmdefine{\BX}{X}
\bmdefine{\BY}{Y}
\bmdefine{\Bb}{b}
\bmdefine{\Bx}{x}
\bmdefine{\By}{y}
\bmdefine{\Bz}{z}
\newcommand{\cF}{{\cal F}}
\begin{document}

\title{A Markov basis for two-state toric homogeneous Markov chain model
without initial parameters}


\author{
Hisayuki Hara\footnote{
Department of Technology Management for Innovation, 
University of Tokyo} \ and 
Akimichi Takemura\footnote{
Graduate School of Information Science and Technology, 
University of Tokyo}
\footnote{JST, CREST}
}

\date{September, 2010}

\maketitle

\begin{abstract}
We derive  a  Markov basis consisting of moves of degree at most three 
for two-state toric homogeneous Markov chain model of arbitrary length 
without parameters for initial states.   
Our basis consists of moves of degree three and degree one, which
alter the initial frequencies, in addition  to moves of degree two and
 degree one  
for toric homogeneous Markov chain model with parameters for initial states.
\end{abstract}

Keywords : algebraic statistics,  Gr\"obner basis, indispensable move,
toric ideal

\section{Introduction}

Consider a Markov chain $X_t$, $t=1,\dots,T\,(\ge 3)$, over finite state
space $\cS$.  
Let $\omega=(s_1, \dots, s_T)\in \cS^T$ denote a path of a Markov
chain.  
In this paper we discuss Markov bases of toric ideals arising from the
following statistical model 
\begin{equation}
\label{eq:model}
p(\omega)=c \beta_{s_1 s_2} \dots \beta_{s_{T-1} s_T},
\end{equation}
where $c$ is a normalizing constant.
In \citet{ascb}, the model (\ref{eq:model}) is called a toric Markov chain
model.  
In \citet{th-homogeneous1} we considered a model with additional parameters
$\gamma_s, s\in \cS,$ for the initial states:
\begin{equation}
\label{eq:model0}
p(\omega)=c \gamma_{s_1} \beta_{s_1 s_2} \dots \beta_{s_{T-1} s_T}
\end{equation}
and derived a Markov basis of toric ideals arising from
(\ref{eq:model0}) for the case of $\cS=\{1,2\}$ (arbitrary $T$) and the
case of $T=3$ (arbitrary $\cS$).   
In \citet{th-homogeneous1} we called the model (\ref{eq:model0}) 
a toric homogeneous Markov chain(THMC) model. 
The model (\ref{eq:model}) corresponds to THMC model with 
$\gamma_1 = \cdots = \gamma_{|\cS|}$. 
For distinguishing two models, we call \eqref{eq:model} 
a THMC model without initial parameters.

In the present paper, we generalize the result in \citet{th-homogeneous1} 
to the model (\ref{eq:model})  and derive a Markov
basis for the case $\cS=\{1,2\}$ and arbitrary $T$. 

From a statistical viewpoint, Markov bases are used to test
goodness-of-fit of the model based on the exact distribution of a test
statistic.  
A data set of paths is summarized in an $|\cS|^T$ contingency table. 
The set of contingency tables sharing sufficient statistic is called a
fiber. 
A Markov basis is defined as a set of moves connecting every fiber.  
In this paper we consider the problem in the framework of contingency
table analysis and derive a Markov basis as a set of moves which
guarantees the connectivity of every fiber. 


The organization of the paper is as follows. 
In Section 2, we introduce some notations and terminologies and give
some preliminary results. 
In Section 3, we state the main theorem and give a Markov basis for 
the model (\ref{eq:model}) with $\cS=\{1,2\}$. 
We give a proof of the theorem in Section 4.
A numerical example with a real data set is given in Section 5.
In Section 6 we end the paper with some concluding remarks. 

\section{Preliminaries}
\subsection{Notation and terminology}

Let $\omega=(s_1, \dots, s_T)$ be a path.  For notational simplicity
we sometimes write $\omega=(s_1\dots s_T)$ or $\omega=s_1\dots s_T$.
If $s_1=\dots=s_T=i$, we call $\omega$ a flat path at state $i$.
If for some $t$ $(1\le t < T)$, $s_1=\dots=s_t=i \neq j = s_{t+1}=\dots=s_T$, 
then we call $\omega$ a single-step path  from $i$ to $j$ at time $t$.
We say that a path $\omega$ starts at $i$ and ends at $j$ if
$s_1=i, s_T=j$.  The set of such paths  is denoted by $W_{i*\dots*j}$.
We call $\omega$ a non-flat cycle, if
it starts from $i$, visits $j\neq i$ on the way and ends at $i$.  We denote
the set of non-flat cycles starting from $i$ by
\begin{equation}
\label{eq:iji}
W_{i*j*i}=\{ \omega \mid s_1=i, s_t=j, s_T=i, \ \text{for some }   1 < t <  T\}.
\end{equation}
In Figure \ref{fig:flat+single-step}, we depict examples of a flat path,
a single-step path and a non-flat cycle. 

\begin{figure}[htbp]
 \centering
 \begin{tabular}{ccc}
  \begin{tikzpicture}[baseline=-1.2cm,scale=0.8]
   \foreach \i in {1,2} {   \foreach \j in {1,...,4}    {
   \path (\j,-\i) coordinate (V\i\j);  \fill (V\i\j) circle (1pt); }}
   \draw (V11)--(V12)--(V13)--(V14);
   \draw (0,-1) node {1};
   \draw (0,-2) node {2};
   \draw (0,-0.4) node {$s\backslash t$};
   \foreach \j in {1,...,4} { \draw (V1\j)  node [above]  {\j};}
  \end{tikzpicture} & 
  \begin{tikzpicture}[baseline=-1.2cm,scale=0.8]
   \foreach \i in {1,2} {   \foreach \j in {1,...,4}    {
   \path (\j,-\i) coordinate (V\i\j);  \fill (V\i\j) circle (1pt); }}
   \draw (V11)--(V12)--(V23)--(V24);
   \draw (0,-1) node {1};
   \draw (0,-2) node {2};
   \draw (0,-0.4) node {$s\backslash t$};
   \foreach \j in {1,...,4} { \draw (V1\j)  node [above]  {\j};}
  \end{tikzpicture}  \vspace{0.2cm} & 
  \begin{tikzpicture}[baseline=-1.2cm,scale=0.8]
   \foreach \i in {1,2} {   \foreach \j in {1,...,4}    {
   \path (\j,-\i) coordinate (V\i\j);  \fill (V\i\j) circle (1pt); }}
   \draw (V11)--(V22)--(V23)--(V14);
   \draw (0,-1) node {1};
   \draw (0,-2) node {2};
   \draw (0,-0.4) node {$s\backslash t$};
   \foreach \j in {1,...,4} { \draw (V1\j)  node [above]  {\j};}
  \end{tikzpicture}  \vspace{0.2cm}\\
  (i) a flat path & (ii) a single-step path & (iii) a non-flat cycle
\end{tabular}
\caption{A flat path,  a single-step path and a non-flat cycle for $T=4$}
 \label{fig:flat+single-step}
\end{figure}
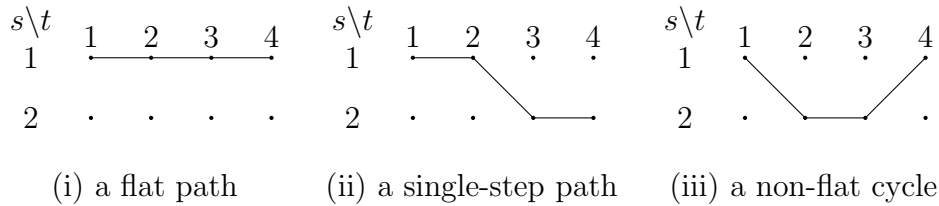

A data set of $n$ paths is summarized in an $|\cS|^T$ contingency
table $\Bx=\{x(\omega), \omega\in \cS^T\}$ of total frequency $n$, where
$x(\omega)$ denotes the frequency of the path $\omega$.
Let $x^{t}_{ij}=\sum_{\omega: s_t=i, s_{t+1}=j} x(\omega)$ 
denote the number of transitions from $s_t=i$ to
$s_{t+1}=j$ in $\Bx$ and let $x^{t}_i=\sum_{\omega: s_t=i} x(\omega)$ denote the frequency of the
state $s_t=i$ in $\Bx$. 
In particular $x^1_i$ is the frequency of the 
initial state $s_1=i$.
Let
\[
x^+_{ij}=\sum_{t=1}^{T-1} x^t_{ij}
\]
denote the total number of transitions from $i$ to $j$ in $\Bx$.

The sufficient statistic for the model \eqref{eq:model}
is given by the frequencies of transitions
\begin{equation}
\label{eq:sufficient-statistic}
\bm{b}=\{x^+_{ij}, i,j\in \cS\}.
\end{equation}
The sufficient statistic for the model \eqref{eq:model0}
is given by the union of (\ref{eq:sufficient-statistic}) and the set of
initial frequencies,  
\[
 \Bb^* = \{x^1_i , i \in \cS\} \  \cup \ \{x^+_{ij}, i,j\in \cS\}
 \supset \Bb.
\]
For $\cS=\{1,2\}$ we write the elements of $\Bb$ in \eqref{eq:sufficient-statistic}
as $b_{11},b_{12},b_{21},b_{22}$.

If we order paths appropriately and write $\Bx$ as a column vector
according to the order, $\Bb$ in (\ref{eq:sufficient-statistic}) is
written in a matrix form  
\[
\Bb = A \Bx,  
\]
where $A$ is an $|\cS|^2 \times |\cS|^T$ matrix consisting of non-negative
integers with the rows indexed by $|\cS|^2$ transitions and the columns
indexed by $|\cS|^T$ paths. The $((i,j),\omega)$ element $A$ is the number of
occurrences of the transition from $i$ to $j$ in the path $\omega$.

$A$ is the configuration defining toric ideal $I_A$ arising from the
model (\ref{eq:model}). 
A toric ideal $I_A$ is the kernel of the homomorphism of polynomial
rings 
$\psi: k[\{p(\omega),\omega\in \cS^T\}] \rightarrow 
k[\{\beta_{ij}, i,j\in \cS\}]$ defined by
\[
\psi: p(\omega) \mapsto \beta_{s_1 s_2} 
\dots \beta_{s_{T-1} s_T},  
\]
where $\{p(\omega), \omega\in \cS^T\}$ is regarded as a set of
indeterminates. 
Algebraically a Markov basis is defined as a set of generators of $I_A$
(\citet{diaconis-sturmfels}). 

The set of all contingency tables sharing $\Bb$ is called a {\it fiber}
and denoted by  
$\cF_{\Bb}=\{\Bx \in \Z^{|\cS|^T}_{\ge 0} \mid A\Bx = \Bb\}$, 
where $\Z_{\ge 0}=\{0,1,\dots\}$.
A move $\Bz$ for the model (\ref{eq:model}) is an integer array
satisfying $A \Bz =0$. 
In contingency table analysis a Markov basis is defined as a finite set 
of moves ${\cal Z}$ satisfying that for all $\Bb$ and all pairs 
$\Bx$ and $\By$ in $\cF_{\Bb}$ there exists a sequence $\Bz_1,\ldots,\Bz_K \in {\cal Z}$ such that 
\begin{equation}
 \label{eq:MB}
  \By = \Bx + \sum_{k=1}^K \Bz_k, \quad 
  \Bx + \sum_{k=1}^l \Bz_k \ge 0, \; l=1,\ldots,K.
\end{equation}
In this article we provide a Markov basis as a set of moves satisfying
(\ref{eq:MB}) for the model \eqref{eq:model} with $\cS=\{1,2\}$.

A move $\Bz$ is expressed by a difference of two contingency tables
$\Bx$ and $\By$ in the same fiber: 
\[
\Bz = \Bx - \By, \qquad  z(\omega)=x(\omega)-y(\omega), \ \omega\in \cS^T.
\]
We write 
$z^t_{ij}=x^t_{ij}-y^t_{ij}$. 

\subsection{Some classes of moves}

In this section we introduce some classes of moves for the model
(\ref{eq:model0}) discussed in \citet{th-homogeneous1}. 
Since $\Bb \subset \Bb^*$, the moves introduced here are also moves for
the model (\ref{eq:model}).  

Denote $\bm{s}_{t:t^\prime} = s_t \ldots s_{t^\prime}$
and $\bm{s}^\prime_{t:t^\prime} = s^\prime_t \ldots  
s^\prime_{t^\prime}$
for $t \le  t^\prime$. 
If $t > t^\prime$, let $\bm{s}_{t:t^\prime}$ be an empty sequence.
Let $\bar \omega=(s_1,\ldots,s_T)$ be a path satisfying 
$s_{t_0}=s_{t_1}=s_{t_2}=i$ for $1 \le t_0 < t_1 < t_2 \le T$ and 
$s_t=j \neq i$ for some $t_0 < t < t_2$.
Then consider the following swapping 
\[
 \bar\omega = 
 (\bm{s}_{1:t_0-1},\bm{s}_{t_0:t_2},\bm{s}_{t_2+1:T})
 \ \ \leftrightarrow \ \ 
 \bar \omega^\prime := 
 (\bm{s}_{1:t_0-1},\bm{s}_{t_1:t_2-1},\bm{s}_{t_0:t_1},\bm{s}_{t_2+1:T}).
\]
Then an integer array $\bm{z}=\{z(w),w \in \cS^T\}$ 
\begin{equation}
 \label{eq:deg1}
 z(\omega) = \left\{
 \begin{array}{rl}
  1 & \text{ if } \omega = \bar \omega\\
  -1& \text{ if } \omega = \bar \omega^\prime\\
  0 & \text{ otherwise}
 \end{array}
\right.
\end{equation}
forms a move for (\ref{eq:model0}).
We call this move a degree one move.

We depict a degree one move with $T=4$, $t_0=1$, $t_1=2$ and $t_2=4$ as
\begin{equation}
\label{movegraph}
\begin{tikzpicture}[baseline=-1.2cm,scale=0.8]
 \foreach \i in {1,2} {   \foreach \j in {1,...,4}    {
 \path (\j,-\i) coordinate (V\i\j);  \fill (V\i\j) circle (1pt); }}
 \draw (V11)--(V12)--(V23)--(V14);
 \draw [dotted] (V11)--(V22)--(V13)--(V14);
 \draw (0,-1) node {1};
 \draw (0,-2) node {2};
 \draw (0,-0.4) node {$s\backslash t$};
 \foreach \j in {1,...,4} { \draw (V1\j)  node [above]  {\j};}
\end{tikzpicture}\;, 
\end{equation}
where a solid line from $(i,t)$ to $(j,t+1)$ represents  
$z^t_{ij}=1$ and a dotted line from $(i,t)$ to $(j,t+1)$ represents  
$z^t_{ij}=-1$. 
We call a graph like (\ref{movegraph}) a move graph.
A node of a move graph is a pair $(i,t)$ of state $i$ and time $t$ and
an edge from $(i,t)$ to $ (j,t+1)$ represents the value of $z^t_{ij}$.  
If $\vert z^t_{ij} \vert =0$, there is no corresponding edge in the
graph. 
If $\vert z^t_{ij} \vert \ge 2$, we write the value of 
$\vert z^t_{ij} \vert$ beside the edge. 

We say that two paths $\omega=(s_1,\dots,s_T)$, $\omega'=(s'_1,\dots, s'_T)$ 
meet (or cross) at the node $(i,t)$ if
$i=s_t=s_t'$. 
If $\omega$ and $\omega'$ cross at the node $(i,t)$, consider the
swapping of these two paths like  
\begin{align*}
 &\{\bar \omega,\bar \omega'\}=
 \{\; (\bm{s}_{1:t-1},i,\bm{s}_{t+1:T}), \ 
 (\bm{s}', i,\bm{s}^\prime_{t+1:T})\; \} \nonumber\\
& \qquad  \leftrightarrow
 \{\; (\bm{s}_{1:t-1},i,\bm{s}^\prime_{t+1:T}), \ 
 (\bm{s}^\prime, i,\bm{s}_{t+1:T})\; \} 
 =:\{\tilde\omega, \tilde\omega'\}  .
\end{align*}
Then the integer array $\bm{z}$
\begin{equation}
 \label{eq:crossing}
  z(\omega) = 
  \left\{
   \begin{array}{rl}
    1 & \text{ if } \omega = \bar \omega\; \text{or}\; \bar \omega'\\
    -1& \text{ if } \omega = \tilde \omega \; \text{or}\; \tilde \omega'\\
    0 & \text{ otherwise}
   \end{array}
  \right.
\end{equation}
forms a move for (\ref{eq:model0}). 
We call this move a crossing path swapping.
As shown in \citet{th-homogeneous1}, 
a crossing path swapping is expressed by the difference of two tables
with the same edge-sign pattern and hence the move graph for a crossing
path swapping has no edge. 
As shown in \citet{th-homogeneous1}, if $\Bx$ and $\By$ are in the same
fiber such that the move graph of $\Bz$ has no edge, $\Bx$ and $\By$ are
connected by crossing path swappings. 

Suppose that $t_0 \neq t_1$. Choose (not necessarily distinct) four
paths  
$\omega_1, \omega_2,\omega_3, \omega_4$ as
\begin{align*}
\omega_1 &= (\bm{s}_{1,1:t_0-1},1,1,\bm{s}_{1,t_0+2:T}),
\ 
\omega_2 = (\bm{s}_{2,1:t_0-1},2,2,\bm{s}_{2,t_0+2:T}), 
\\
\omega_3 &= (\bm{s}_{3,1:t_1-1},1,2,\bm{s}_{3,t_1+2:T}),
\ 
 \omega_4 = (\bm{s}_{4,1:t_1-1},2,1,\bm{s}_{4,t_1+2:T}), 
\end{align*}
where 
$\bm{s}_{k,t:t'}= s_{k,t},\ldots,s_{k,t'}$.
Then we consider the swapping 
$\{\omega_1, \omega_2, \omega_3, \omega_4\} \leftrightarrow 
\{\tilde\omega_1, \tilde\omega_2, \tilde\omega_3, \tilde\omega_4\}$,
where 
\begin{align*}
 \tilde\omega_1 &= (\bm{s}_{1,1:t_0-1},1,2,\bm{s}_{2,t_0+2:T}),
 \ 
 \tilde\omega_2 = (\bm{s}_{2,1:t_0-1},2,1,\bm{s}_{1,t_0+2:T}),
 \\
 \tilde\omega_3 &= (\bm{s}_{3,1:t_1-1},1,1,\bm{s}_{4,t_1+2:T}), \ 
 \tilde\omega_4 = (\bm{s}_{4,1:t_1-1},2,2,\bm{s}_{3,t_1+2:T}).
\end{align*}
Then the integer array $\Bz$
\[
  z(\omega) = \left\{
 \begin{array}{rl}
  1 & \text{if } \omega = \omega_1,\ldots,\omega_4,\\
  -1& \text{if } \omega = \tilde \omega_1,\ldots,\tilde \omega_4,\\
  0 & \text{otherwise.}
 \end{array}
\right. 
\]
is a move for (\ref{eq:model0}) and is called 2 by 2 swap.
The corresponding move graph is depicted as
\begin{equation}
 \label{eqfig:2by2}
  \begin{tikzpicture}[baseline=-1.2cm,scale=0.8]
   \foreach \i in {1,2} {   \foreach \j in {1,...,4}    {
   \path (\j,-\i) coordinate (V\i\j);  \fill (V\i\j) circle (1pt); }}
   \path (V11) node [above] {$t_0$};
   \path (V13) node [above] {$t_1$};
   \drbl{11}{12}; \drbl{21}{22}; \drbl{13}{24}; \drbl{23}{14};
   \drwl{11}{22}; \drwl{21}{12}; \drwl{13}{14}; \drwl{23}{24};
  \end{tikzpicture}\;.
\end{equation}
For $\cS=\{1,2\}$ it can be shown that we can always choose either
$\omega_1=\omega_3$,  
$\omega_2=\omega_4$ or $\omega_1 = \omega_4$, $\omega_2=\omega_3$
(\citet{th-homogeneous1}).  
Therefore for $\cS=\{1,2\}$ a 2 by 2 swap always corresponds to a degree
two move $\Bz$.

Next we consider the following swapping for $T\ge 4$:  
\begin{align*}
 &
 (\omega_1,\omega_2) :=
 \left\{(\bm{s}_{t_0-1},1,1,2,\bm{s}_{t_0+3:T}),
 (\bm{s}^\prime_{1:t_1-1},1,2,2,\bm{s}^\prime_{t_1+3:T})\right\}  
 \nonumber \\
 & \qquad \leftrightarrow
  (\tilde \omega_1, \tilde \omega_2) :=
 \left\{ (\bm{s}_{1:t_0-1},1,2,2,\bm{s}_{t_0+3:T}),   
 (\bm{s}^\prime_{1:t_1-1},1,1,2,\bm{s}^\prime_{t_1+3:T}) \right\}.
\end{align*}
Then the integer array $\Bz$
\begin{equation}
\label{eqfig:11swap}
  z(\omega) = \left\{
 \begin{array}{rl}
  1 & \text{ if } \omega = \omega_1,\omega_2,\\
  -1& \text{ if } \omega = \tilde \omega_1,\tilde \omega_2,\\
  0 & \text{ otherwise.}
 \end{array}
\right. 
\end{equation}
also forms a move for (\ref{eq:model0}), which can be depicted as 
\[
\begin{tikzpicture}[baseline=-1.2cm,scale=0.8]
\path (0,-1.5) coordinate (midy);
\path (V11) node [above] {$t_0$};
\path (V13) node [above] {$t_1$};
\foreach \i in {1,2} {   \foreach \j in {1,...,5}    {
  \path (\j,-\i) coordinate (V\i\j);  \fill (V\i\j) circle (1pt); }}
\drbl{11}{12}; \drbl{12}{23}; \drbl{13}{24}; \drbl{24}{25};
\drwl{11}{22}; \drwl{22}{23}; \drwl{13}{14}; \drwl{14}{25};
\end{tikzpicture}
\ \ \text{\ or}\ \ 
 \begin{tikzpicture}[baseline=-1.2cm,scale=0.8]
\path (0,-1.5) coordinate (midy);
\path (V11) node [above] {$t_0$};
\path (V12) node [above] {$t_1$};
  \foreach \i in {1,2} {   \foreach \j in {1,...,4}    {
  \path (\j,-\i) coordinate (V\i\j); \fill (V\i\j) circle (1pt); }}
  \drbl{11}{12}; \drbl{12}{23}; \drbl{23}{24};
  \drwl{11}{22}; \drwl{22}{23}; \drwl{12}{13}; \drwl{13}{24};
  \draw (2.8,-1.5) node {2};
 \end{tikzpicture} \  \ .
\]

\section{Main result}

In \citet{th-homogeneous1} 
we provided a Markov basis for the model (\ref{eq:model0}) as follows.

\begin{proposition}
\label{prop:2state0}
 A Markov basis for $\cS=\{1,2\}$, $T\ge 4$, for \eqref{eq:model0}
 consists of   
 (i) degree one moves (\ref{eq:deg1}), (ii) crossing path swappings,
 (iii) 2 by 2 swaps (\ref{eqfig:2by2}),  
 (iv) moves in (\ref{eqfig:11swap}).
 For $T=3$ a Markov basis consists of the first three types of moves.
\end{proposition}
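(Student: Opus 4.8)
The plan is to prove the proposition by establishing that the four families of moves connect every fiber of \eqref{eq:model0}. Fix a value $\Bb^*$ of the sufficient statistic and two tables $\Bx,\By\in\cF_{\Bb^*}$, and set $\Bz=\Bx-\By$. Because $\Bz$ is a move for \eqref{eq:model0}, its move graph obeys three bookkeeping constraints: at each internal node $(i,t)$ with $1<t<T$ the signed flow balances, $\sum_j z^t_{ij}=\sum_j z^{t-1}_{ji}$; the initial counts are preserved, $z^1_{i1}+z^1_{i2}=0$; and each total transition count is preserved, $\sum_{t=1}^{T-1} z^t_{ij}=0$. My target is the edge-free case: once I can transform $\Bx$ by the listed moves into a table $\Bx'$ with $A\Bx'=\Bb^*$ whose difference from $\By$ has a move graph with no edges, the fact recalled in Section 2 (crossing path swappings connect any two same-fiber tables whose move graph has no edge) finishes the argument. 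So the whole problem reduces to killing the edges of the move graph.

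To kill the edges I would run a distance-reduction argument on the total edge weight $\sum_{t,i,j}|z^t_{ij}|$. Assuming $\Bz$ still has a nonzero edge, I pick the smallest time $t$ at which some $z^t_{ij}\ne 0$ and use the three constraints to locate a compatible configuration of nonzero edges: a surplus of one transition type at one time must be compensated by a deficit of the same type at another time, and the conservation law propagates these surpluses as a flow along the trellis. The goal is to show that at least one of the structured moves is applicable to $\Bx$ and strictly decreases the total edge weight. A degree one move \eqref{eq:deg1} cyclically rotates the transition types within a single non-flat cycle of one path and is used to realign the time at which a given path's transition occurs; a $2$ by $2$ swap \eqref{eqfig:2by2} relocates a paired $(11,22)\leftrightarrow(12,21)$ discrepancy from one time to another; and, when $T\ge 4$, the degree three move \eqref{eqfig:11swap} handles the residual time-reorderings that a $2$ by $2$ swap cannot realize because the requisite pair of paths is absent.

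The heart of the proof is then a finite case analysis on the sign pattern of $(z^t_{11},z^t_{12},z^t_{21},z^t_{22})$ at the chosen time, constrained by $z^t_{11}+z^t_{12}+z^t_{21}+z^t_{22}=0$ and by flow balance with the neighbouring layers. For each admissible pattern I would exhibit a specific move from the catalogue whose $+1$ paths actually occur with positive frequency in $\Bx$, so that nonnegativity of the intermediate table is preserved, and that cancels at least one unit of edge weight. When $T=3$ there are only two transition times, so the flow can always be rebalanced using degree one moves, crossing swappings and $2$ by $2$ swaps alone; this is precisely why the degree three move is unnecessary and yields the separate small-$T$ statement.

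The step I expect to be the main obstacle is guaranteeing applicability and strict decrease simultaneously: each structured move requires certain source paths to occur with positive frequency, and it is exactly this nonnegativity check -- rather than the routine arithmetic that the move preserves $\Bb^*$ -- that makes the case analysis delicate. I would organize the cases by the position of the extreme nonzero edge and argue that if no reducing move were applicable, then the three constraints would force $\Bz=0$, a contradiction. Handling the interaction of overlapping non-flat cycles, where a naive move might drive some $x(\omega)$ negative, is the situation in which the degree three move \eqref{eqfig:11swap} becomes indispensable.
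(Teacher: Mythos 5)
First, a point of orientation: Proposition~\ref{prop:2state0} is not proved in this paper at all --- it is quoted from the authors' earlier work \citet{th-homogeneous1}, so there is no in-paper proof to match your argument against. Judged on its own, your proposal is a strategy outline rather than a proof, and the gap is exactly where you yourself locate it. The reduction to the edge-free case plus distance reduction on $\sum_{t,i,j}|z^t_{ij}|$ is the standard and correct framework (it is the same framework the present paper uses for its Theorem~\ref{thm:main}), but the entire mathematical content of the proposition lives in the step you defer: the finite case analysis showing that for every nonzero $\Bz=\Bx-\By$ at least one move from the catalogue is \emph{applicable to} $\Bx$ or $\By$ (its negative part is actually present with positive frequency) and strictly decreases the edge weight. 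You write ``for each admissible pattern I would exhibit a specific move'' without exhibiting any, and you do not address the hard sub-case where the needed positive-frequency paths are absent --- which is precisely the situation that forces the type~(iv) move into the basis and distinguishes $T\ge 4$ from $T=3$. An examiner cannot check a proof whose decisive step is a promissory note.

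There is also a concrete factual error that matters for the statement being proved: you repeatedly call the move in \eqref{eqfig:11swap} ``the degree three move.'' It is a degree \emph{two} move --- it exchanges the pair $\{\omega_1,\omega_2\}$ for $\{\tilde\omega_1,\tilde\omega_2\}$ --- and the paper explicitly notes that the Markov basis of Proposition~\ref{prop:2state0} consists of moves of degree at most two. The genuine degree three moves (the sliding moves of \eqref{eq:deg3move}) belong to the model \emph{without} initial parameters and play no role in this proposition. Conflating the two suggests the case analysis, had you carried it out, might have been organized around the wrong move: the type~(iv) move is needed not to ``handle residual time-reorderings'' in some generic sense, but specifically to transfer a single $1\to 2$ transition between two paths that each pass through the configuration $1,\ast,2$ at different times, when no $2$ by $2$ swap is available because the complementary $(2,1)$ transition pattern does not occur in the table.
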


Since 2 by 2 swaps for $\cS=\{1,2\}$ corresponds to degree two moves, 
\eqref{eq:model0} has a Markov basis consisting of moves of degree at most two.

We call degree one moves in (\ref{eq:deg1}) {\em type I degree one moves}.
We note that the moves introduced in the previous section are moves 
for (\ref{eq:model0}) and hence do not alter the initial frequencies. 
For connecting fibers for the model \eqref{eq:model} we need moves
altering initial frequencies.  First we present a degree one move
altering the initial frequency.
Consider a non-flat cycle $\omega\in W_{i*j*i}$ in
\eqref{eq:iji}, vising $j$ at time $t$. Consider the following degree one move:
\begin{equation}
 \label{eq:type2-deg1}
  W_{i*j*i}\ni \omega=(s_1,\dots,s_T) \ \leftrightarrow \ 
  \omega'=(s_t, \dots, s_{T-1}, s_1, \dots, s_t) \in W_{j*i*j}.
\end{equation}
$\omega$ and $\omega'$ has the same number of transitions, but
the initial states are different.  We call this move
a {\em type II degree one move}.
An example of a type II degree one move for $T=4$ is depicted as
\begin{equation}
\label{eqfig:typeIIdeg1}
 \begin{tikzpicture}[baseline=-1.2cm,scale=0.8]
  \foreach \i in {1,2} {   \foreach \j in {1,...,4}    {
  \path (\j,-\i) coordinate (V\i\j);  \fill (V\i\j) circle (1pt); }}
  \draw (V11)--(V22)--(V13)--(V14);
  \draw [dotted] (V21)--(V12)--(V13)--(V24);
 \end{tikzpicture}\ .
\end{equation}

Next we consider a degree three move.  Let $1\le a \le b \le T-1$, $a+b \le  T-1$.
Choose $1\le u \le T-1$ such that
\[
a+u \le T-1, \quad b+(T-1-u) \le T-1.
\]
The range of such $u$ is
\[
b\le u \le T-1-a .
\]
Consider the following set of three paths: 
\[
W_1 = \{ 11\dots 1,\  \underbrace{1\dots 1}_{a}\underbrace{2\dots 2}_{T-a}, \ 
\underbrace{1\dots 1}_{b}\underbrace{2\dots 2}_{T-b}\},
\]
which consists of a  flat path  at $i=1$, 
a single-step path going from $1$ to $2$ at time $a$,  and a 
single-step path going from $1$ to $2$ at time $b$.
Choose $u \in \{b, \dots, T-1-a\}$ and let $a'=a+u$, 
$b'=b+T-1-u$.  Consider
\[
W_2 = \{ 22\dots 2,\  \underbrace{1\dots 1}_{a'}\underbrace{2\dots 2}_{T-a'}, \ 
\underbrace{1\dots 1}_{b'}\underbrace{2\dots 2}_{T-b'}\}.
\]
Then integer array $\Bz$
\begin{equation}
 \label{eq:deg3move}
  z(\omega) = 
  \left\{
   \begin{array}{rl}
    1 & \text{ if } \omega \in W_1,\\
    -1 & \text{ if } \omega \in W_2,\\
    0 & \text{ otherwise }
   \end{array}
  \right.
\end{equation}
is a move for \eqref{eq:model0}.  
We call this move a degree 3 sliding move.
An example of a degree 3 sliding move with $T=4$, $a=1$, $b=2$ and $u=3$
is depicted in Figure \ref{fig:slidingmoves} (i). 
As a degree 3 sliding move, we also consider the time reversal 
of \eqref{eq:deg3move} which involves single-step paths going from 2 to
1 as depicted in Figure \ref{fig:slidingmoves} (ii). 

\begin{figure}[htbp]
 \centering
 \begin{tabular}{cc}
  \begin{tikzpicture}[baseline=-1.2cm,scale=0.8]
   \path (0,-1.5) coordinate (midy); 
   \foreach \i in {1,...,2} { 
   \foreach \j in {1,...,4} { 
   \path (\j,-\i) coordinate (V\i\j); 
   \fill (V\i\j) circle (1pt); 
   }}
   \draw (V11)--(V22); 
   \draw[dotted] (V21)--(V22); 
   \draw[dotted] (V12)--(V13); 
   \draw (V12)--(V23); 
   \draw (V13)--(V14); 
   \draw[dotted] (V13)--(V24); 
   \draw (3.55,-1.4) node{\scriptsize 2}; 
   \draw (V23)--(V24); 
  \end{tikzpicture} & 
  \begin{tikzpicture}[baseline=-1.2cm,scale=0.8]
   \path (0,-1.5) coordinate (midy); 
   \foreach \i in {1,...,2} { 
   \foreach \j in {1,...,4} { 
   \path (\j,-\i) coordinate (V\i\j); 
   \fill (V\i\j) circle (1pt); 
   }}
   \draw (V11)--(V12); 
   \draw[dotted] (V21)--(V12); 
   \draw (1.45,-1.4) node{\scriptsize 2}; 
   \draw (V21)--(V22); 
   \draw[dotted] (V12)--(V13); 
   \draw (V22)--(V13); 
   \draw (V23)--(V14); 
   \draw[dotted] (V23)--(V24); 
  \end{tikzpicture}   \vspace{0.2cm}\\
  (i) & (ii)
 \end{tabular}
 \caption{Sliding moves}
 \label{fig:slidingmoves}
\end{figure}
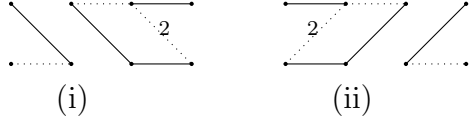
Now we state the main theorem of this paper.

\begin{theorem}
\label{thm:main}
 The set of type II degree one moves of \eqref{eq:type2-deg1} and the set
 of degree 3 sliding moves in \eqref{eq:deg3move}, in addition to moves
 in Proposition \ref{prop:2state0}, constitutes a Markov basis for
 \eqref{eq:model0}.
\end{theorem}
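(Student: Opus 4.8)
The plan is to leverage Proposition \ref{prop:2state0} and reduce the whole problem to adjusting the initial frequencies. The only difference between the sufficient statistics of the two models is that $\Bb^* = \Bb \cup \{x^1_i\}$ records the initial frequencies whereas $\Bb$ does not. For $\cS=\{1,2\}$ the initial-frequency information is the single number $x^1_1$ (since $x^1_2 = n-x^1_1$), and the total count $n = (b_{11}+b_{12}+b_{21}+b_{22})/(T-1)$ is already determined by $\Bb$. Hence a fiber $\cF_{\Bb}$ for the model without initial parameters decomposes as a disjoint union of the finer fibers $\cF_{\Bb^*}$ over the admissible values of $x^1_1$, and on each finer fiber the moves (i)--(iv) of Proposition \ref{prop:2state0} already guarantee connectivity. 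Thus it suffices to show that the two new families of moves connect these finer fibers, i.e. that one can drive $x^1_1$ from its value in $\Bx$ to its value in $\By$.

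First I would verify that the two new moves do exactly this. A direct count shows that a type II degree one move \eqref{eq:type2-deg1} and a degree 3 sliding move \eqref{eq:deg3move} each preserve every transition total $x^+_{ij}$ (so they are genuine moves for \eqref{eq:model}) while changing $x^1_1$ by $\pm 1$. The type II move does so by cyclically rotating a non-flat cycle in $W_{1*2*1}$ into one in $W_{2*1*2}$, and the sliding move by trading the flat path at $1$ against the flat path at $2$ while re-slotting two single-step paths to absorb the transition bookkeeping. Because each move shifts $x^1_1$ by exactly one, there is no parity obstruction, and it suffices to realize a unit change in the appropriate direction.

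The core of the argument is a reachability claim that I would prove by induction on $|x^1_1 - y^1_1|$: if $\Bx,\By\in\cF_{\Bb}$ with, say, $x^1_1 > y^1_1$, then after applying finitely many Proposition \ref{prop:2state0} moves to $\Bx$ (which keep it in the same finer fiber and hence fix $x^1_1$) we reach a table to which one new move applies and decreases $x^1_1$ by one while staying nonnegative. Here I would split into cases according to whether $\Bx$ carries a non-flat cycle based at state $1$. If it does, a type II degree one move applies immediately. If it does not, then every path starting at $1$ is either the flat path at $1$ or ends at $2$; I would use the degree-two and degree-one moves of Proposition \ref{prop:2state0} to rearrange such paths into a flat path at $1$ together with two single-step paths in the configuration required by \eqref{eq:deg3move}, and then apply the sliding move in reverse. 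The symmetric situations (increasing $x^1_1$, and cycles or flats based at state $2$) follow by the same reasoning under the time/state symmetry already used for the time-reversed sliding moves.

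The step I expect to be the main obstacle is precisely this last structural point: showing that whenever $x^1_1$ can still be decreased within the fiber but no non-flat cycle based at $1$ is present, the Proposition \ref{prop:2state0} moves can always manufacture the exact flat-plus-two-single-step pattern that the reverse sliding move consumes (and, in the remaining boundary situation where neither a cycle nor a usable flat can be produced, that $x^1_1$ is already at its extreme admissible value so no further change is needed). This requires careful accounting of how the fixed totals $b_{11},b_{12},b_{21},b_{22}$ constrain which path shapes must coexist and what range of $x^1_1$ is admissible. The identity $x^1_1 - x^T_1 = b_{12} - b_{21}$, obtained by summing net up-crossings over the whole table, is the kind of invariant I would use to pin down this range and to certify that the target value $y^1_1$ is reached without gaps.
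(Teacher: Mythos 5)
Your overall architecture is the same as the paper's: decompose each fiber of \eqref{eq:model} into the finer fibers of \eqref{eq:model0} indexed by $x^1_1$, let Proposition \ref{prop:2state0} handle connectivity within each finer fiber, and use the two new families of moves (each shifting $x^1_1$ by exactly one) to step between adjacent finer fibers. Your verification that both new moves preserve $\Bb$ while changing $x^1_1$ by $\pm1$, and the invariant $x^1_1-x^T_1=b_{12}-b_{21}$, are correct and are also the paper's starting point.

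However, there is a genuine gap, and you have located it yourself without closing it: everything hinges on the claim that whenever $x^1_1>y^1_1$ one can, using only Proposition \ref{prop:2state0} moves, bring $\Bx$ into a configuration where either a type II degree one move applies or the pattern $W_1$ of \eqref{eq:deg3move} (a flat path at $1$ plus two single-step paths with $a+b\le T-1$) is present. This is not routine bookkeeping; it is the mathematical content of the theorem and occupies the paper's Lemmas \ref{lem:b11=0}--\ref{lem:4}. Concretely, three pieces are missing. First, the degenerate fibers: when $b_{11}=0$ no flat path at $1$ can ever exist, and one must show (Lemma \ref{lem:b11=0}) that such fibers are already connected by initial-frequency-preserving moves, i.e.\ that $x^1_1$ is constant on them; when $b_{21}=0$ there are no non-flat cycles at all, the entire burden falls on sliding moves, and the admissible values of the number of flat paths are governed by a mod-$(T-1)$ analysis (Lemma \ref{lem:b12=0}). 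Second, in the main case one must prove that if no type II move reduces $a=z^1_1$ then $x(11\cdots1)>0$ and $y(22\cdots2)>0$ can be arranged (Lemma \ref{lem:3}); this requires a careful argument with crossing path swappings and 2 by 2 swaps and is exactly the ``boundary situation'' you flag but do not resolve. Third, one must normalize both tables to flat plus single-step paths (Lemma \ref{lem:4}) and check that the side condition of the sliding move (existence of a suitable $u$) can be met. Without these pieces the induction on $|x^1_1-y^1_1|$ does not close; in particular you have not ruled out that the set of achievable values of $x^1_1$ within a fiber has gaps that a unit-step move cannot bridge.
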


We give a proof of this theorem for $T\ge 4$ in the next section.  
Note that the above statement 
also covers the case $T=3$, namely, for $T=3$, we do not need moves of type (iv) in
Proposition \ref{prop:2state0}.

At this point we briefly discuss the case of $T=3$.  Although we can prove
Theorem \ref{thm:main} for the case $T=3$ 
along the lines of the next section, it is trivial to use 4ti2 (\citet{4ti2}) and
verify Theorem \ref{thm:main}.  It is interesting to note that 
the degree 3 sliding move for $T=3$ 
\[
\{111,122,122\}\  \leftrightarrow \ \{ 112,112,222\}
\]
is indispensable (\citet{takemura-aoki-2004aism}, 
\citet{ohsugi-hibi-2005jaa}).
Hence every Markov basis has to contain a degree three move for $T=3$.

\section{Proof of the main theorem}
In this section we give a proof of Theorem \ref{thm:main} for $T\ge 4$.
We first take care of two types of special fibers.  
Then we employ a distance reduction argument as in
\citet{th-homogeneous1} and \citet{takemura-aoki-2005bernoulli}.

Consider a fiber with $b_{11}=0$ or $b_{22}=0$.  
We have the following lemma.
\begin{lemma}
\label{lem:b11=0}
 Fibers with $b_{11}=0$ or $b_{22}=0$ are connected by type I degree
 one moves and crossing path swappings. 
\end{lemma}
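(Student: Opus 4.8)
The plan is to convert the statement into an equalization of the time-indexed transition counts $x^t_{ij}$ and then invoke the connectivity by crossing path swappings recorded in Section 2. By the symmetry $1\leftrightarrow 2$ it suffices to treat the case $b_{11}=0$; the case $b_{22}=0$ follows by interchanging the two states. The one structural fact I would extract from the hypothesis is that $b_{11}=0$ forces $x(\omega)=0$ for every path $\omega$ containing a $11$ factor, so that in every occurring path each state $1$ is \emph{isolated}: whenever the path sits at $1$ at a time $t<T$ it must step to $2$ at time $t+1$. Consequently $x^t_{11}=0$ for all $t$, the only local transitions present are $1\!\to\!2$, $2\!\to\!1$, $2\!\to\!2$, and the whole edge pattern $\{x^t_{ij}\}$ is encoded by the state-$1$ profile $(x^1_1,\dots,x^T_1)$, since $x^t_{12}=x^t_1$, $x^t_{21}=x^{t+1}_1$ and $x^t_{22}$ is the complement.

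First I would record the reduction. By the result quoted in Section 2, two tables $\Bx,\By$ in the same fiber whose move graph has no edge, i.e.\ with $x^t_{ij}=y^t_{ij}$ for all $t,i,j$, are connected by crossing path swappings alone. Hence it is enough to exhibit a sequence of type I degree one moves carrying $\Bx$ to a table with the same edge pattern as $\By$; after that, crossing path swappings finish the connection.

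Next I would run a left-to-right sweep over $t=1,\dots,T-1$, at each step using type I moves to align $x^t_1$ with $y^t_1$ while preserving the agreements already achieved at earlier times. The mechanism is that a type I move at state $2$, applied with $s_{t_0}=s_{t_1}=s_{t_2}=2$ enclosing a single isolated $1$ (exactly the shape available under $b_{11}=0$), transposes two runs of the flat $2$-region of a path and thereby slides an isolated $1$ to an earlier or later time without altering any transition total and without touching the initial state; for instance it sends the path $2122$ to $2212$. By sliding the isolated $1$'s of the paths that disagree with $\By$ at time $t$, I would drive $x^t_1-y^t_1$ to zero, the vanishing at earlier times being preserved because those $1$'s are no longer moved. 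Iterating to $t=T-1$ equalizes the entire profile, hence the edge pattern, and the lemma follows.

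The step I expect to be the real obstacle is the control of the first time slice, and with it the initial frequency $x^1_1$. Both type I moves and crossing path swappings leave every $x^1_i$ fixed, so the sweep can close only if $x^1_1$ already agrees for $\Bx$ and $\By$; the crux of the argument is therefore to understand how much freedom in $x^1_1$ the fiber actually permits once the transition totals $\Bb$ and the isolated-$1$ structure are imposed, and to deal with the boundary positions $t=1$ and $t=T$, where an isolated $1$ is flanked by a $2$ on only one side and so cannot be slid freely. This boundary and initial-frequency analysis is where I would concentrate the effort and where I would check the statement most carefully, since it is precisely the point at which these two move types, which cannot change any initial frequency, must nevertheless suffice; the interior sliding and the concluding crossing swaps are then routine.
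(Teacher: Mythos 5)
Your reduction---equalize the time-sliced counts $x^t_{ij}$ by type I moves, then let crossing path swappings finish---is sound as far as it goes, and your observation that $b_{11}=0$ makes the whole edge pattern a function of the profile $(x^1_1,\dots,x^T_1)$ is correct. But the proposal stops exactly at the step that carries all the content. You note that both permitted move types fix every $x^1_i$ (indeed they also fix $x^T_i$ and $\sum_t x^t_1$, since a type I move merely permutes segments within one path and a crossing swap preserves every $x^t_{ij}$), and you defer the question of whether a fiber with $b_{11}=0$ can contain tables with different initial frequencies. It can. Take $T=4$ and $\Bb=(b_{11},b_{12},b_{21},b_{22})=(0,1,1,1)$: the single-path tables $\{1221\}$ and $\{2212\}$ lie in the same fiber, since each path has exactly one $1\!\to\!2$, one $2\!\to\!1$ and one $2\!\to\!2$ transition, yet $x^1_1=1$ for the first and $x^1_1=0$ for the second; moreover no type I move applies to $1221$ (no state occurs three times) and no crossing swap is available with $n=1$. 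So your sweep cannot close, and in fact no argument using only these two move types can succeed: connecting these two tables requires a type II degree one move ($1221\leftrightarrow 2212$ is precisely the cyclic rotation of \eqref{eq:type2-deg1}). Your instinct that the initial-frequency analysis is the point at which to scrutinize the statement was the right one; the outcome of that scrutiny is negative, and a complete treatment must either restrict the claim to tables with matching start/end frequencies or enlarge the allowed set of moves.

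For comparison, the paper's own proof proceeds differently: it normalizes the paths of $\Bx$ within each start/end class $W_{i*\cdots*j}$ to an explicit canonical form (alternating paths, at most one partially alternating path, and minimal paths) and asserts that the resulting table is unique. That argument establishes connectivity among tables having the same number of paths in each class $W_{i*\cdots*j}$ --- essentially the same portion of the fiber your sweep would handle --- but the uniqueness assertion implicitly assumes those class counts are determined by $\Bb$, which the example above shows they are not. So your proposal and the paper's proof founder on the same point; yours at least flags it explicitly rather than passing over it.
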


\begin{proof}
 By symmetry consider the case $b_{11}=0$.
 Consider an arbitrary fiber $\cF_\Bb$ with $b_{11}=0$ and arbitrary
 $\Bx\in \cF_\Bb$. 
 It suffices to show that we can transform $\bm{x}$ to
 a unique $\Bx^* \in \cF_\Bb$ by type I degree one moves and crossing
 path swappings. 
 This can be accomplished as follows.  
 We can easily check that by applying type I degree one moves and
 crossing path swappings,  we can transform paths 
 $w \in W_{1* \cdots *2}$ in $\bm{x}$ to following three types of paths,    
 \begin{enumerate}
  \item $(1212 \cdots 12)$ when $T$ is even or $(1212 \cdots 122)$ when 
	$T$ is odd; 
	\[
	\begin{tikzpicture}[baseline=-1.1cm,scale=0.7]
	 \path (0,-1.5) coordinate (midy); 
	 \foreach \i in {1,...,2} { 
	 \foreach \j in {1,...,6} { 
	 \path (\j,-\i) coordinate (V\i\j); 
	 \fill (V\i\j) circle (1pt); 
	 }}
	 \draw (V11)--(V22)--(V13)--(V24)--(V15)--(V26); 
	\end{tikzpicture}
	\quad \text{ or }
	\begin{tikzpicture}[baseline=-1.1cm,scale=0.7]
	 \path (0,-1.5) coordinate (midy); 
	 \foreach \i in {1,...,2} { 
	 \foreach \j in {1,...,7} { 
	 \path (\j,-\i) coordinate (V\i\j); 
	 \fill (V\i\j) circle (1pt); 
	 }}
	 \draw (V11)--(V22)--(V13)--(V24)--(V15)--(V26)--(V27); 
	\end{tikzpicture}
	\]
  \item $(1212 \cdots 12 22 \cdots 2)$; 
	\[
	\begin{tikzpicture}[baseline=-1.1cm,scale=0.7]
	 \path (0,-1.5) coordinate (midy); 
	 \foreach \i in {1,...,2} { 
	 \foreach \j in {1,...,6} { 
	 \path (\j,-\i) coordinate (V\i\j); 
	 \fill (V\i\j) circle (1pt); 
	 }}
	 \draw (V11)--(V22)--(V13)--(V24)--(V25)--(V26); 
	\end{tikzpicture}\quad
	\]
  \item $(12 22 \cdots 2)$; 
	\[
	\begin{tikzpicture}[baseline=-1.1cm,scale=0.7]
	 \path (0,-1.5)	 coordinate (midy); 
	 \foreach \i in {1,...,2} { 
	 \foreach \j in {1,...,6} { 
	 \path (\j,-\i) coordinate (V\i\j); 
	 \fill (V\i\j) circle (1pt); 
	 }}
	 \draw (V11)--(V22)--(V23)--(V24)--(V25)--(V26); 
	\end{tikzpicture}\quad
	\]
 \end{enumerate}
 so that the number of type 2 paths is at most one. 
 In the same way we can transform paths $w \in W_{2* \cdots *2}$ in
 $\bm{x}$ to 
 \begin{enumerate}
  \item $(2121 \cdots 2122)$ when $T$ is even and 
	$(2121 \cdots 212)$ when $T$ is odd; 
  \item $(2121 \cdots 2122 \cdots 2)$; 
 \item $(22 \cdots 2)$; 
 \end{enumerate}
 so that the number of type 2 paths is at most one. 
 The paths $w \in W_{1* \cdots *1}$ in $\bm{x}$ are transformed to 
 \begin{enumerate}
  \item $(1212 \cdots 12)$ when $T$ is even and 
	$(1212 \cdots 1221)$ when $T$ is odd; 
  \item $(1212 \cdots 1222 \cdots 21)$; 
 \item $(122 \cdots 21)$; 
 \end{enumerate}
 so that the number of type 2 paths is at most one. 
 The paths $w \in W_{2* \cdots *1}$ in $\bm{x}$ are transformed to 
 \begin{enumerate}
  \item $(2121 \cdots 21)$ when $T$ is even and 
	$(2121 \cdots 21221)$ when $T$ is odd; 
  \item $(2121 \cdots 2122 \cdots 21)$; 
 \item $(22 \cdots 21)$; 
 \end{enumerate}
 so that the number of type 2 paths is at most one. 
 Then the resulting contingency table is uniquely defined.
\end{proof}

Next consider a fiber with $b_{12}=0$ or $b_{21}=0$.  
\begin{lemma}
 \label{lem:b12=0}
 Fibers with $b_{12}=0$ or $b_{21}=0$ are connected by 
 moves of type (iv) in Proposition \ref{prop:2state0}, crossing path swappings 
 and degree 3 sliding moves.
\end{lemma}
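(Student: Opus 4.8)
The plan is to reduce the statement, by symmetry, to the case $b_{21}=0$, and then to exhibit in each such fiber a single \emph{canonical} table to which every element of the fiber is connected. Under the state-swap $1\leftrightarrow 2$ the invariant $b_{12}$ is interchanged with $b_{21}$, and each of the three listed move types is carried to a move of the same list (here I use that the degree $3$ sliding moves and the type (iv) moves are taken together with their time reversals); hence it suffices to treat a fiber $\cF_\Bb$ with $b_{21}=0$, the case $b_{12}=0$ following verbatim. When $b_{21}=0$ no path of positive frequency may contain a $2\to 1$ transition, so every path occurring in any $\Bx\in\cF_\Bb$ is monotone: it is the flat path $1\ldots 1$, the flat path $2\ldots 2$, or a single-step path $\underbrace{1\ldots 1}_{a}\underbrace{2\ldots 2}_{T-a}$ for some $1\le a\le T-1$. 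Writing $p_0,q_0,p_a$ for the respective frequencies, the table is described by the vector $(p_0,q_0,p_1,\dots,p_{T-1})$, and $\Bb$ fixes precisely $P:=\sum_a p_a=b_{12}$, the sample size $n=p_0+q_0+P$, and $R:=p_0(T-1)+\sum_a a\,p_a=b_{11}+b_{12}$. Since each monotone shape occurs in a unique way, distinct tables have distinct transition profiles $\{x^t_{ij}\}$; crossing path swappings connect only tables with identical profile, so they act trivially here and the entire burden falls on the type (iv) and sliding moves.

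I would first record the action of these two move types on $(p_0,q_0,(p_a))$. On monotone paths a type (iv) move specialises to deleting single-steps at positions $i,j$ and reinserting them at $i{+}1,j{-}1$; thus it fixes $p_0,q_0$ together with the two moments $\sum_a p_a$ and $\sum_a a\,p_a$, and realises the classical ``move one token up, one token down'' operation on $\{1,\dots,T-1\}$, which connects all single-step configurations sharing prescribed values of these two moments. A sliding move trades a flat path $1\ldots1$ for a flat path $2\ldots2$, so $p_0\mapsto p_0\mp1$ and $q_0\mapsto q_0\pm1$, while shifting the positions of two single-steps by a total of $T-1$; a direct check confirms it preserves $P,n,R$. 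In particular the lattice generated by these moves has no invariant beyond the three defining fiber equations, so only nonnegativity can obstruct connectivity.

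The canonical table I would target is determined by the minimal feasible value $p_0^{\min}$ of the flat-$1$ count (the admissible $p_0$ form a contiguous interval, since $R-p_0(T-1)$ must lie in $[P,P(T-1)]$ and $q_0=n-P-p_0$ must be nonnegative), together with a fixed reference single-step configuration having the forced moments $P$ and $M^\ast:=R-p_0^{\min}(T-1)$; then $q_0=n-P-p_0^{\min}$ is also fixed. To reach it from an arbitrary $\Bx$, observe that if $p_0>p_0^{\min}$ then feasibility at $p_0-1$ forces the current moment $\sum_a a\,p_a\le(P-1)(T-1)$, so the single-steps are not all at the top position $T-1$; applying type (iv) moves (which leave $p_0$ untouched) I can then bring one token to position $1$ and another to a position $\le T-2$, producing a pair of single-steps whose positions sum to at most $T-1$. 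A reverse sliding move consuming this pair together with one of the $p_0\ge 1$ flat-$1$ paths lowers $p_0$ by one while staying nonnegative; iterating brings $p_0$ down to $p_0^{\min}$, after which type (iv) moves alone carry the single-step part to the reference configuration. The degenerate cases $P\le 1$ are disposed of at the outset: when $P=0$ or $P=1$ the three fiber equations already determine $(p_0,q_0,(p_a))$ uniquely (for $P=1$ the switch position is pinned down modulo $T-1$ inside $\{1,\dots,T-1\}$), so the fiber is a single point.

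The main obstacle is exactly the applicability of the reverse sliding move: a priori the single-steps one wishes to push upward might all sit at high positions, where the constraint $a+b\le T-1$ on the sliding parameters forbids the move. The delicate point to verify is therefore the moment inequality $\sum_a a\,p_a\le(P-1)(T-1)$ valid whenever $p_0$ is not yet minimal, guaranteeing that a type (iv) rearrangement can always manufacture the required low pair of single-steps before the slide, and the accompanying check that the target high positions $a'=a+u,\ b'=b+(T-1-u)$ lie in range for some $u$ with $b\le u\le T-1-a$; once these are in hand the reduction and hence the connectivity follow.
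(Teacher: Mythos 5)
Your proposal is correct, and its skeleton coincides with the paper's: reduce by the state symmetry to $b_{21}=0$, observe that every path is then monotone so a table is the vector $(p_0,q_0,p_1,\dots,p_{T-1})$ of flat-path counts and single-step positions, note that crossing path swappings act trivially, that type (iv) moves realize the balanced token moves $e_a+e_b\mapsto e_{a-1}+e_{b+1}$ preserving the count $P$ and the moment $M=\sum_a a\,p_a$ (so they connect everything within a stratum of fixed $(p_0,q_0)$, the fact the paper imports from Proposition \ref{prop:2state0}), and that a sliding move is the only way to pass between strata, shifting $M$ by $T-1$ while trading a flat path at $1$ for one at $2$. Where you genuinely depart from the paper is in the inter-stratum step. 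The paper argues by cases on $b_{12}\in\{1\},\{2\},\{\ge 3\}$, and its treatment of $b_{12}\ge 3$ (``apply moves of type (iv) so that the locations \dots are far apart'') is only a sketch of why a sliding move becomes applicable. You replace this with a uniform monotone descent to a canonical table at the minimal feasible $p_0$: whenever $p_0>p_0^{\min}$, feasibility of the value $p_0-1$ yields $M\le (P-1)(T-1)$, which after a type (iv) rearrangement produces two single-steps at positions $a,b$ with $a+b\le T-1$ --- precisely the condition $b\le u\le T-1-a$ being satisfiable --- so a reverse slide lowers $p_0$ by one without violating nonnegativity. This makes the applicability condition for the sliding move explicit, subsumes the paper's $b_{12}=2$ dichotomy ($\alpha=\beta=T-2$ versus not) as the statement that the feasible interval for $p_0$ may be a single point, and fills in the part of the paper's argument that is vaguest; the degenerate cases $P\le 1$ come out the same way in both treatments. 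In short, same decomposition, but your handling of the crucial connectivity-across-strata step is cleaner and more complete than the published one.
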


\begin{proof}
 By symmetry consider the case $b_{21}=0$.  In this case every path
 of a contingency table is either a flat path or a single-step path going
 from 1 to 2. In particular the number of single-step paths is $b_{12}$.
 Note that a contingency table of a 
 fiber is determined by choosing positions of 
 transitions for the single-step paths from 1 to 2.   A single-step path
 going from 1 to 2 at time $t$ contains $t-1$ transitions $1\rightarrow 1$
 and $T-t-1$ transitions $2\rightarrow 2$. Therefore once we choose
 positions of the transitions, the number of transitions $1\rightarrow 1$
 and $2\rightarrow 2$ contained in these single-step paths are determined.  The
 remaining  number of transitions $1\rightarrow 1$ and $2\rightarrow 2$ 
 belong to flat paths.  Therefore the remaining numbers have to be multiples of $T-1$.
 This implies that fiber is partitioned into subsets
 by the number of flat paths at $1$ or at $2$.
 Note that two contingency tables  $\Bx, \By$ with the same number of
 flat paths at $1$ and at $2$ have the same initial frequencies. 
 Then these $\Bx$, $\By$ are connected by moves of type (iv) in Proposition
 \ref{prop:2state0} and crossing path swappings.   

 Suppose that initial frequencies of $\Bx$ and $\By$ are different.
 We note that $b_{11} + b_{22} + b_{12} = n(T-1)$. 
 Let $\alpha\in \{0,1,\dots,T-2\}$ be the remainder of $b_{11}$ when it
 is divided by $T-1$ and $\beta$ be the remainder of $b_{22}$ when it is
 divided by $T-1$.
 Then $\alpha + \beta + b_{12} = n'(T-1)$ for some integer $n' \le n$. 

 Since $0 \le \alpha,\beta \le T-2$, 
 $\alpha + \beta + b_{12} = (T-1)$ for $b_{12}=1$.
 Then we can see that a fiber contains a single contingency table when 
 $b_{12}=1$.

 Now consider the case that $b_{12}=2$.  
 It can be easily verified that if 
 $\alpha=T-2$, $\beta=T-2$ and the two transitions
 from $1$ to $2$ have to occur at $t$ and $T-t$ ($1\le t \le T-1$).
 Then the number of transitions $1\rightarrow 1$ and 
 $2 \rightarrow 2$ in the single-step paths are both $T-2$.
 Since $\Bx$ and $\By$ are in the same fiber, the number of flat paths
 are the same.
 Hence the initial frequency of $\Bx$ and $\By$ is uniquely determined.  
 Therefore $\Bx$ and $\By$ in the same fiber are connected by moves of
 type (iv) in Proposition  \ref{prop:2state0}.  
 If  $\alpha < T-2$ or $\beta < T-2$, then 
 the number of flat paths at 1 in the fiber takes two consecutive
 integer values. 
 Without loss of generality consider the case $x(11\dots1)=y(11\dots1)+1$.
 Then we can apply  a degree 3 sliding move $\Bz$ to $\Bx$ such that
 $\Bx'(11\dots1)=\Bx(11\dots1)-1$ where $\Bx'=\Bx +\Bz$. Then
 $\Bx'$ and $\By$ are connected by moves of type (iv) in Proposition
 \ref{prop:2state0}. 

  For the case $b_{12} \ge 3$, there are more than two single-step paths going
 from 1 to 2. In this case we pick two such paths and apply moves of type (iv) of
 Proposition \ref{prop:2state0}, so that the locations of  transitions from 1 to 2 of 
 these two paths are far apart from those of other paths. 
 Then we can slide these two paths to alter the initial frequency.
 This proves the lemma.
\end{proof}

After proving above two lemmas, it suffices to consider fibers 
satisfying 
\begin{equation}
 \label{ineq:positive.suff.stat}
  b_{11} > 0, \; 
  b_{12} > 0, \;   
  b_{21} > 0, \; 
  b_{22} > 0.
\end{equation}
We now employ distance reduction argument.  
Let $\Bx$ and $\By$ be two contingency tables of the same fiber 
and $\Bz := \Bx - \By$. 
Define $|\Bz| := \sum_{t}\sum_{i,j} |z_{ij}^t|$. 
If $\Bx$ and $\By$ have the same initial frequencies, then they are
connected by moves in Proposition \ref{prop:2state0}.  
We note that 
\[
b_{21} - b_{12} = x_1^T - x_1^1 = y_1^T - y_1^1, 
\]
which implies $z_1^1 = z_1^T$. 
In the same way we have $z_2^1 = z_1^T$. 
Therefore, without loss of generality we assume 
\[
z^1_1 =a, \; z^T_1=a, \; z_2^1=-a, \; z^T_2=-a,   \quad a>0.
\]

\begin{lemma}
\label{lem:3}
 Assume (\ref{ineq:positive.suff.stat}) and $T>3$. 
 Suppose that $a=z^1_1 > 0$ can not be decreased by type II degree one moves.  
 Then $x(11\dots 1)>0$ and $y(22\dots 2)>0$ or 
 by these moves we can transform $\Bx$ and $\By$ to $\Bx'$ and
 $\By'$ such that $x'(11\dots 1)>0$ and $y'(22\dots 2)>0$.
\end{lemma}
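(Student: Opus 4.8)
The plan is to read the hypothesis structurally and then run a crossing-swap argument. Throughout I use that crossing path swappings and the other moves of Proposition~\ref{prop:2state0} preserve the initial frequencies, hence leave $a$ unchanged, so that ``decreasing $a$ by a type II move'' may be preceded by any such rearrangement. Since a type II move sends a cycle in $W_{1*2*1}$ to one in $W_{2*1*2}$ (lowering $x^1_1$) and conversely, the assumption that $a=z^1_1$ cannot be decreased says precisely that, even after rearranging by Proposition~\ref{prop:2state0} moves, $\Bx$ contains no non-flat cycle in $W_{1*2*1}$ and $\By$ contains none in $W_{2*1*2}$. Consequently every path of $\Bx$ that starts and ends at $1$ is the flat path $11\dots1$, and every path of $\By$ that starts and ends at $2$ is $22\dots2$, so it suffices to produce a path from $1$ to $1$ in $\Bx$ and one from $2$ to $2$ in $\By$. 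These two tasks act on $\Bx$ and on $\By$ separately and are interchanged by the involution swapping the states $1\leftrightarrow2$ together with the tables $\Bx\leftrightarrow\By$, which fixes $a$ and preserves both parts of the hypothesis; I will therefore treat only $x(11\dots1)$, and the transformed $\Bx',\By'$ produced independently will satisfy both conclusions at once.

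First I would note that $z^1_1=z^T_1=a>0$ forces $x^1_1\ge a>0$ and $x^T_1\ge a>0$, so $\Bx$ has a path starting at $1$ and a path ending at $1$. If some path does both we are done, so assume not; then $\Bx$ has a start-$1$/end-$2$ path $\alpha$ and a start-$2$/end-$1$ path $\beta$ but no path from $1$ to $1$. With two states, $\alpha$ and $\beta$ are either state-complementary ($\beta_t\ne\alpha_t$ for all $t$, i.e. $\beta=\bar\alpha$) or they meet at a common node. In the latter case a crossing path swapping replaces $\{\alpha,\beta\}$ by a start-$1$/end-$1$ path and a start-$2$/end-$2$ path; the new start-$1$/end-$1$ path is either $11\dots1$, giving $x'(11\dots1)>0$ as required, or a cycle in $W_{1*2*1}$, on which one further type II move would lower $a$ and contradict the hypothesis. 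Hence, under the hypothesis, no start-$1$/end-$2$ path of $\Bx$ meets any start-$2$/end-$1$ path, which forces all start-$1$ paths to equal a single path $\alpha$ and all end-$1$ paths to equal its complement $\bar\alpha$.

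It remains to rule out this rigid complementary configuration, and this is where I expect the real work to lie. The start-$1$ paths all equal $\alpha$ (with multiplicity $x^1_1$) and the end-$1$ paths all equal $\bar\alpha$ (with multiplicity $x^T_1$), so their joint contribution to $b_{11}$ and to $b_{22}$ is pinned down by $\alpha$ alone; the remaining start-$2$/end-$2$ paths of $\Bx$---each a flat path $22\dots2$ or a cycle in $W_{2*1*2}$---must then account for the rest of $b_{11}$ and $b_{22}$. Because a $W_{2*1*2}$ filler feeds $b_{11}$ only through an interior run of $1$'s, the idea is to isolate such a run by a $2\times2$ swap against $\alpha$ and then apply a type II move to expose a path from $1$ to $1$, which is either $11\dots1$ (done) or a forbidden $W_{1*2*1}$ cycle (contradiction). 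The main obstacle is carrying out exactly this step in general: bookkeeping the interior $1$-runs of the $W_{2*1*2}$ fillers and the length/parity constraints for arbitrary $T>3$, and verifying that the path produced is genuinely $11\dots1$ rather than another cycle. This is precisely where the type II moves promised in the statement are consumed, and where $b_{11}>0$ and $b_{22}>0$ are essential, since they guarantee that both a stray $1$-run and a flat block of $2$'s are simultaneously available to be recombined.
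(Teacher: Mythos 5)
Your opening reductions are sound and match the paper: interpreting the hypothesis as ``all $1$-to-$1$ paths of $\Bx$ and all $2$-to-$2$ paths of $\By$ are flat,'' and using a crossing path swapping whenever a start-$1$/end-$2$ path meets a start-$2$/end-$1$ path to manufacture a $1$-to-$1$ path that must then be flat. The problem is everything after that. Your plan reduces the lemma to a \emph{single-table} claim --- that $\Bx$ alone, under its half of the hypothesis, can always be driven to contain $11\dots1$ --- and then invokes the state-and-table swapping involution. But the final and hardest case, the ``rigid complementary configuration'' where all start-$1$ paths equal one $\alpha$ and all end-$1$ paths equal $\bar\alpha$, is exactly the case you leave unexecuted: you name the obstacle (bookkeeping interior $1$-runs of the $W_{2*1*2}$ fillers, verifying the produced path is genuinely flat) and do not resolve it. That is the content of the lemma, so as written this is a genuine gap, not a routine verification. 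It is also not clear the single-table claim is provable in the form you need: for $T=6$ the table $\Bx=\{(121222),(212111)\}$ is a rigid complementary configuration with $b_{11}=b_{22}=2$, $b_{12}=b_{21}=3$ all positive and \emph{no} fillers at all, and its fiber contains no table whatsoever holding the flat path $111111$ (which would require $b_{11}\ge T-1=5$). This $\Bx$ is only excluded because no admissible partner $\By$ exists (equivalently, because a rearrangement of $\Bx$ does produce a non-flat $W_{1*2*1}$ cycle), and detecting either fact already requires the kind of global, two-table analysis your sketch postpones.

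The paper sidesteps the need for a full single-table argument with a cheap two-table counting step that your plan discards: if $x(11\cdots1)=0$, then (all $1$-to-$1$ paths of $\Bx$ being flat) $\Bx$ has no $1$-to-$1$ path at all, and the identities $z^1_2=z^T_2=-a<0$ force $y(22\cdots2)>0$ directly. So the disjunction ``$x(11\cdots1)>0$ or $y(22\cdots2)>0$'' holds for free, one side may be assumed without loss of generality, and only the \emph{other} table needs to be transformed --- and even there, in the sub-case where $\By$ has a unique pair of complementary single-step paths, the paper does not establish the stated positivity but instead reduces $|\Bz|$ by a 2 by 2 swap, which suffices for the surrounding distance-reduction argument. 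Your symmetric, table-by-table decomposition forfeits both of these escape hatches, so to complete your route you would have to prove a strictly stronger statement than the paper does, and you have not supplied the argument for it.
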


\begin{proof}
 Suppose that there exists a path 
 $w = (s_1,\ldots,s_{T}) \in W_{1*2*1}$ in $\bm{x}$ such that $s_t = 2$
 for $1<t<T$. 
 Then by a type II degree one move (\ref{eq:type2-deg1}) with $i=1$ and
 $j=2$, we can reduce $a$.
 Therefore we can assume that all paths in $W_{1*\cdots*1}$ are
 $(11\cdots1)$. 
 By symmetry, we can also assume that all paths of $\By$ in 
 $W_{2*\cdots*2}$ are $(22\cdots2)$.

 Suppose that $x(11 \cdots 1)=0$. 
 By the above argument, $\Bx$ has no path in $W_{1*\cdots*1}$. 
 Hence for any path in $\Bx$, $s_1=1$ implies $s_T=2$ and
 $s_T=1$ implies $s_1=2$. 
 Then we note that $z_2^1 = z_2^T=-a$ implies $y(22\cdots 2)>0$. 
 Therefore either $x(11 \cdots 1)>0$ or $y(22\cdots 2)>0$ is satisfied.  
 We now assume $x(11 \cdots 1)>0$ without loss of generality.
 
 Suppose that $y(22\cdots2)=0$.
 Then for any path in $\By$, $s_1=2$ implies $s_T=1$ and $s_T=2$ implies
 $s_1=1$. 
 Suppose that $\bar \omega = (s_1,\ldots,s_{T}) \in W_{2*\cdots*1}$ 
 is a path in $\By$.   
 Since $z_2^1=z_2^T=-a < 0$, there has to exist a path 
 $\bar \omega^\prime = (s^\prime_1,\ldots,s^\prime_{T}) \in
 W_{1*\cdots*2}$ in $\By$. 

 If $\bar \omega$ and $\bar \omega'$ meet at $t$, 
 by applying a crossing path swapping $\bm{z}$ in
 (\ref{eq:crossing}), 
 $\bm{y}$ is transformed to 
 $\bm{y}^\prime = \{y'(w), w \in \{1,2\}^T\} \in \cF_\Bb$ 
 such that $y^\prime(\tilde \omega) >0$ and 
 $\tilde \omega = (s_1,\ldots,s_t,s'_{t+1},\ldots,s'_T) 
 \in W_{2* \cdots *2}$. 
 Then by the above argument we can assume that 
 $y^\prime(22\cdots 2) >0$. 

 Next we consider the case where $\bar \omega$ does not meet 
 any $\bar \omega^\prime \in W_{1*\cdots*2}$ in $\By$. 
 Then $\bm{y}$ has only one path in each of $W_{2* \cdots *1}$ and  
 $W_{1* \cdots *2}$. 
 Let $\bar \omega$ and $\bar \omega^\prime$ be such paths.  
 Suppose that $\bar \omega=(2121 \cdots 21)$ and 
 $\bar \omega^\prime = (1212 \cdots 12)$.
 Then from the assumptions that $b_{11} > 0$ and 
 $b_{22} > 0$  we have $y^\prime(22\cdots 2) >0$.

 Suppose that both $\bar \omega$ and $\bar \omega^\prime$ are
 single-step paths.  
 If $b_{22}=T-2$, 
 \[
 y(\omega) = 
 \left\{
 \begin{array}{ll}
  1, & \text{ if } \omega = \bar \omega \text{ or } \bar \omega^\prime\\
  0, & \text{ otherwise}
 \end{array}
 \right.
 \]
 and $b_{12}=b_{21}=1$. 
 Then $\Bx$ has to contain two single-step paths $\tilde \omega$ and 
 $\tilde \omega^\prime$ which does not meet each other. 
 $(\bar \omega, \bar \omega^\prime)$ is transformed to 
 $(\tilde \omega, \tilde \omega^\prime)$ by a 2 by 2 swap (\ref{eqfig:2by2}).
 Hence $|\bm{z}|$ is reduced. 
 
 If $b_{22} > T-2$, there has to exist another path 
 $\bar \omega''=(s''_1,\ldots,s''_T)$ in $\By$ such that 
 $s''_{t} = s''_{t+1} =2$ for some $t$.
 By a 2 by 2 swap, $\bar \omega$ and $\bar \omega^\prime$ are
 transformed to single-step paths at $t$.  
 Denote them by 
 $\hat \omega=(22\cdots 211\cdots 1)$ and 
 $\hat \omega^\prime = (11\cdots 122\cdots 2)$.
 Then apply crossing path swappings as follows, 
 \begin{align*}
  & \{\hat \omega, \bar \omega''\} = 
  \{
  (22 \cdots2 11 \cdots 1), 
  (\bm{s}''_{1:t}, \bm{s}''_{t+1:T})
  \}\\
  & \qquad \leftrightarrow 
  \{
  (22 \cdots 2,\bm{s}''_{t+1:T}), 
  (\bm{s}''_{1:t},11 \cdots 1),
  \} 
 \end{align*}
 \begin{align*}
  & \{\hat \omega', (22 \cdots 2,\bm{s}''_{t+1:T}) \} = 
  \{
  (22 \cdots 22,\bm{s}''_{t+2:T}), 
  (11\cdots 122\cdots 2)
  \}\\
  & \qquad \leftrightarrow \{
  (22\cdots 2), (11\cdots 12 \bm{s}''_{t+2:T}),
  \} 
 \end{align*}
 Therefore $\By$ is transformed to $\By'$ such that 
 $y^\prime(22\cdots 2) >0$. 

 Suppose that both $\bar\omega$ and $\bar\omega^\prime$ are not single-step
 paths. 
 Then we can easily see that $\bar \omega$ is transformed to another path 
 $\bar \omega''=(s''_1,\ldots,s''_T)$ by a type I degree one move. 
 Then $\omega'$ and $\omega''$ meet somewhere.
 Therefore in the same way as the above argument by a crossing path
 swapping to $\bm{y}$, $\By$ is transformed to $\By'$ such that 
 $y^\prime(22\cdots 2) >0$.
\end{proof}

\begin{lemma}
\label{lem:4}
 Assume (\ref{ineq:positive.suff.stat}) and $T>3$. 
 Suppose that $a=z^1_1>0$ can not be decreased by the moves of Theorem
 \ref{thm:main} except for degree 3 sliding moves. 
 Then by these moves we can transform $\Bx$ and $\By$ to $\Bx'$ and
 $\By'$ which consist of flat paths and single-step paths only.
\end{lemma}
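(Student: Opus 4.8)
The plan is to push the reduction of Lemma~\ref{lem:3} as far as it will go and then to use crossing path swappings, exploiting the hypothesis that $a=z^1_1$ cannot be lowered, in order to eliminate every path carrying two or more switches. By Lemma~\ref{lem:3} I may assume $x(11\dots1)>0$ and $y(22\dots2)>0$, that each path of $\Bx$ in $W_{1*\dots*1}$ equals $11\dots1$, and each path of $\By$ in $W_{2*\dots*2}$ equals $22\dots2$. Since every move in Theorem~\ref{thm:main} fixes the sufficient statistic $\Bb$, it fixes the total switch number $b_{12}+b_{21}$; the content of the lemma is therefore that the switches can be redistributed so that no path carries more than one. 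I would do this for $\Bx$ and then invoke the symmetry interchanging the states $1\leftrightarrow2$ and the tables $\Bx\leftrightarrow\By$ (which fixes $a$ and preserves the hypothesis) to treat $\By$.

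\emph{Excluding all but single-visit $W_{2*\dots*2}$ cycles.} First I claim that, under the hypothesis, the only paths of $\Bx$ with at least two switches are single-visit cycles $2^a1^b2^c\in W_{2*\dots*2}$. Suppose instead $\Bx$ contained a path of $W_{1*\dots*2}$ or $W_{2*\dots*1}$ with three or more switches, or a $W_{2*\dots*2}$ cycle visiting state $1$ twice. I would cross it with a flat path $11\dots1$ at an interior time where both occupy state $1$ (for a twice-visiting cycle, at two such times in succession). A crossing path swapping \eqref{eq:crossing} has an empty move graph, so it leaves all initial frequencies, and hence $a$, unchanged; and the crossing is chosen so as to deposit a non-flat cycle into the class $W_{1*\dots*1}$. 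A type II degree one move \eqref{eq:type2-deg1} applied to that exposed cycle then strictly lowers $a$, contradicting the hypothesis. Hence only the single-visit cycles $2^a1^b2^c$ survive.

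\emph{Splitting the surviving cycles.} Crossing $2^a1^b2^c$ with a flat path $11\dots1$ at its last time in state $1$ replaces the two paths by the single-step paths $1^{a+b}2^{c}$ and $2^{a}1^{\,T-a}$; being a crossing it keeps the initial frequencies (hence $a$) fixed and removes one multi-switch path. Iterating clears $\Bx$ of cycles. The step I expect to be the crux is ensuring a flat path $11\dots1$ is available at each split, since a split consumes one and creates none. As long as a flat path of either colour remains I can proceed: against $11\dots1$ directly, or---after an $a$-\emph{raising} type II move turning the cycle into a $W_{1*\dots*1}$ cycle, which the hypothesis permits---against $22\dots2$. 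The hard case is that no flat path of either colour survives while a cycle persists, which forces $b_{12}+b_{21}>n$ and hence a fiber containing no all-flat/single-step table. I would rule this out under the hypothesis: with no flat paths present, the surviving single-step paths, constrained by the fiber identity $x_1^T=x_1^1+b_{21}-b_{12}$ together with $x_1^T,x_1^1\ge 0$, can always be crossed so as to expose a non-flat $W_{1*\dots*1}$ cycle when $a>0$, whence a crossing followed by a type II move would lower $a$---a contradiction. Thus the splitting runs to completion and $\Bx$ is reduced to flat and single-step paths only.

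\emph{Conclusion.} Applying the mirror of the above to $\By$---using its flat paths $22\dots2$ to expose and split single-visit cycles of $W_{1*\dots*1}$---reduces $\By$ to the same form, and yields the desired $\Bx'$ and $\By'$. The whole argument rests on the two conserved features of a crossing path swapping, the switch count and the initial frequencies: any cycle a crossing forces into the wrong colour class can be cashed in, through a type II move, for a decrease of $a$, and it is precisely this conversion that the hypothesis forbids. The technical heart is the flat-path bookkeeping of the third paragraph; everything else is the cycle-exposure device of the second.
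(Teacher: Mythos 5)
Your first two paragraphs follow the paper's own argument: the paper likewise uses the hypothesis to show (by crossing a bad path with a flat path at $1$ and then applying a type II degree one move, which would decrease $a$) that every non-flat, non-single-step path of $\Bx$ must be a single-visit cycle $2^p1^q2^r$, and then splits each such cycle against a flat path $1\cdots1$ by a crossing path swapping. The gap is in your third paragraph, the one you yourself call the technical heart. The assertion that, once no flat path survives, the remaining single-step paths ``can always be crossed so as to expose a non-flat $W_{1*\cdots*1}$ cycle'' is not proved, and the mechanism fails in general: a path $2^m1^{T-m}$ and a path $1^{m'}2^{T-m'}$ meet in state $2$ only when $m>m'$, and that configuration is already excluded by the hypothesis (crossing there yields $2^T$ together with the cycle $1^{m'}2^{m-m'}1^{T-m}\in W_{1*2*1}$, so a type II move would lower $a$); when $m<m'$ they meet only in state $1$, and the crossing returns a flat path $1^T$ together with a new cycle $2^m1^{m'-m}2^{T-m'}$, so the number of cycles is not reduced and the procedure goes in circles; when $m=m'$ they never meet. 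So the escape route you sketch for the hard case does not go through as described.

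The correct resolution is that the hard case cannot arise, and this follows from a count that uses the reduced form of $\By$ as well as that of $\Bx$. After the reduction, $\Bx$ consists of $f_1$ flat paths at $1$, $f_2$ flat paths at $2$, $u$ single-step paths from $1$ to $2$, $v$ single-step paths from $2$ to $1$, and $k$ single-visit cycles at $2$; by the mirror argument $\By$ consists of $g_1,g_2,u',v'$ such paths and $k'$ single-visit cycles at $1$. Then $b_{12}=u+k=u'+k'$, $x_1^1=f_1+u$ and $y_1^1=g_1+u'+k'$, hence $a=x_1^1-y_1^1=f_1-g_1-k$. Since $a\ge 1$ and $g_1\ge 0$, this gives $f_1\ge k+1$, and the difference $f_1-k$ is preserved by each split, so a flat path at $1$ is available at every stage (dually $g_2>k'$ for $\By$). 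To be fair, the published proof is equally silent on this depletion question---it invokes $x(11\cdots1)>0$ once per cycle without comment---so you were right to flag it; but having flagged it you are obliged to close it, and your proposed closure is the one step of your argument that fails.
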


\begin{proof}
 By Lemma \ref{lem:3}, we can assume that $x(11\cdots 1) >0$. 
 By the argument in the proof of Lemma \ref{lem:3}, we can assume
 that paths in $\bm{x}$ which start and end at $1$ are flat paths at 
 $1$ $(11 \cdots 1)$.
 In the same way we can easily show that paths in $\bm{x}$ which start
 at $1$ and end at $2$ are assumed to be single-step paths 
 $(11 \cdots 1 2 \cdots 2)$.    
 We can also assume that paths in $\bm{x}$ which ends at $1$ are flat
 paths at $1$ $(11 \cdots 1)$ or single-step paths 
 $(22 \cdots 2 1 \cdots 1)$.  
 
 Next we consider a path $\omega$ in $\bm{x}$ which starts and ends at
 $2$.   
 Suppose that $\omega$ is not a flat path at $2$. 
 Then there exists $1 < t_0 \le t_1 < T$ such that 
 $\bm{s}_{1:t_0-1}=(22 \cdots 2)$, $s_{t_0}=s_{t_1}=1$, 
 $\bm{s}_{t_1+1:T}=(22 \cdots 2)$.
 We now suppose that there exists $t_0 < t_2 < t_1$ such that 
 $s_{t_2}=2$. 
 Then by crossing path swapping of $\omega$ and a flat path $\omega'$ at
 $1$,
 we can transform $(\omega,\omega')$ to 
 $\tilde \omega \in W_{2*1*2}$ and $\tilde \omega' \in W_{1*2*1}$ as
 follows.  
 \begin{align*}
  &
 \begin{tikzpicture}[scale=0.5]
  \path (0,-1.5)	 coordinate (midy); 
  \foreach \i in {1,...,2} { 
  \foreach \j in {1,...,7} { 
  \path (\j,-\i) coordinate (V\i\j); 
  \fill (V\i\j) circle (1pt); 
  }}
  \draw (V21)--(V22)--(V13)--(V24)--(V15)--(V26)--(V27); 
 \end{tikzpicture}\quad , 
 \begin{tikzpicture}[scale=0.5]
  \path (0,-1.5)	 coordinate (midy); 
  \foreach \i in {1,...,2} { 
  \foreach \j in {1,...,7} { 
  \path (\j,-\i) coordinate (V\i\j); 
  \fill (V\i\j) circle (1pt); 
  }}
  \draw (V11)--(V12)--(V13)--(V14)--(V15)--(V16)--(V17); 
 \end{tikzpicture}\\
 & \qquad \qquad \qquad \leftrightarrow
 \begin{tikzpicture}[scale=0.5]
  \path (0,-1.5)	 coordinate (midy); 
  \foreach \i in {1,...,2} { 
  \foreach \j in {1,...,7} { 
  \path (\j,-\i) coordinate (V\i\j); 
  \fill (V\i\j) circle (1pt); 
  }}
  \draw (V21)--(V22)--(V13)--(V14)--(V15)--(V26)--(V27); 
 \end{tikzpicture}\quad ,
  \begin{tikzpicture}[scale=0.5]
  \path (0,-1.5)	 coordinate (midy); 
  \foreach \i in {1,...,2} { 
  \foreach \j in {1,...,6} { 
  \path (\j,-\i) coordinate (V\i\j); 
  \fill (V\i\j) circle (1pt); 
  }}
  \draw (V11)--(V12)--(V13)--(V24)--(V15)--(V16)--(V17); 
  \end{tikzpicture}\quad .
 \end{align*}
 Then we can reduce $a$ by applying a type II degree one move to 
 $\tilde \omega'$.  
 Hence we can assume that $\bm{s}_{t_0:t_1}=(11 \cdots 1)$. 
 
 Since $x(11 \cdots 1) > 0$, 
 we can apply a crossing path swapping of 
 $\omega$ and a flat path $(11 \cdots 1)$, we can transform them to two
 single-step paths as 
 \begin{align*}
  &
 \begin{tikzpicture}[scale=0.5]
  \path (0,-1.5)	 coordinate (midy); 
  \foreach \i in {1,...,2} { 
  \foreach \j in {1,...,6} { 
  \path (\j,-\i) coordinate (V\i\j); 
  \fill (V\i\j) circle (1pt); 
  }}
  \draw (V21)--(V22)--(V13)--(V14)--(V25)--(V26); 
 \end{tikzpicture}\quad , 
 \begin{tikzpicture}[scale=0.5]
  \path (0,-1.5)	 coordinate (midy); 
  \foreach \i in {1,...,2} { 
  \foreach \j in {1,...,6} { 
  \path (\j,-\i) coordinate (V\i\j); 
  \fill (V\i\j) circle (1pt); 
  }}
  \draw (V11)--(V12)--(V13)--(V14)--(V15)--(V16); 
 \end{tikzpicture}\\
 & \qquad \qquad \qquad \leftrightarrow
 \begin{tikzpicture}[scale=0.5]
  \path (0,-1.5)	 coordinate (midy); 
  \foreach \i in {1,...,2} { 
  \foreach \j in {1,...,6} { 
  \path (\j,-\i) coordinate (V\i\j); 
  \fill (V\i\j) circle (1pt); 
  }}
  \draw (V21)--(V22)--(V13)--(V14)--(V15)--(V16); 
 \end{tikzpicture}\quad ,
  \begin{tikzpicture}[scale=0.5]
  \path (0,-1.5)	 coordinate (midy); 
  \foreach \i in {1,...,2} { 
  \foreach \j in {1,...,6} { 
  \path (\j,-\i) coordinate (V\i\j); 
  \fill (V\i\j) circle (1pt); 
  }}
  \draw (V11)--(V12)--(V13)--(V14)--(V25)--(V26); 
  \end{tikzpicture}\quad .
 \end{align*}
 Therefore we can transform $\bm{x}$ to $\bm{x}'$.
 In the same way we can show that $\bm{y}$ is transformed to 
 $\bm{y}'$. 
\end{proof}

By Lemma \ref{lem:4} we assume that $\Bx$ and $\By$ consist of flat
paths and single-step paths only.    
Now by applying degree 3 sliding moves and adjusting the number of flat 
paths at 1 as in Lemma \ref{lem:b12=0}, 
we can adjust the initial frequencies of $\Bx$ and $\By$.
This proves Theorem \ref{thm:main}.

\section{A numerical example}

In this section we give a numerical example for testing THMC model
without initial parameters against THMC model with initial parameters
using a Markov basis derived in the previous section.    

Table \ref{tab:klotz} refers to the number of sequences of the sexes of
the first four children in order of birth for selected Amish families
which had more than or equal to four children (\citet{klotz1972}). 
In the selected families, parents were born before or up to 1910.
The families with multiple birth are eliminated.
Klotz \citet{klotz1972} found some Markov dependence between sexes of
consecutive children. 

Here we consider the test of the uniformity of the initial
parameters $\gamma_1 = \gamma_2$. 
In order to verify the uniformity of the initial distribution, 
we test the goodness-of-fit of THMC model without initial parameters
against THMC model $H_1$ via Markov basis technique.  
We use likelihood ratio statistic $L$ as a test statistics.
For data in Table 1 we have $L = 0.1219$. 

We computed the exact distribution of $L$ via MCMC with a Markov basis
derived in Theorem 3.1. 
We sampled 10,000 tables after 5,000 burn-in steps.
The histogram of the sampling distribution of $L$ is shown in 
Figure \ref{fig:MCMC}.
The solid line represents the density function of the asymptotic
$\chi^2$ distribution with degrees of freedom $1$. 
The asymptotic $p$-value and the exact $p$-value are 
$0.7270$ and $0.6461$, respectively and hence $H_0$ is accepted.\\

\begin{table}[htbp]
 \centering
 \caption{The sexes of the first four children in order of birth for
 selected Amish families}
 \label{tab:klotz}
  \begin{tabular}{cc|cc}\hline
   MMMM & 8 & FMMM & 13 \\ \hline
   MMMF & 14& FMMF & 11\\
   MMFM & 13& FMFM & 9\\
   MMFF & 19& FMFF & 9\\
   MFMM & 11& FFMM & 10\\
   MFMF & 9 & FFMF & 8\\
   MFFM & 11& FFFM & 9\\
   MFFF & 13& FFFF & 10\\ \hline
  \end{tabular}\\
 M : male, \; F : female
\end{table}

\begin{figure}
 \centering
 \includegraphics[scale=0.4]{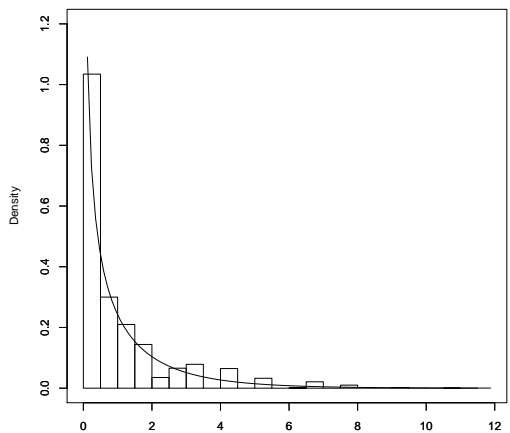}\\
 \caption{Sampling distribution of $L$ via MCMC}
 \label{fig:MCMC}
\end{figure}

\section{Concluding remarks}

We derived a Markov basis for THMC model without initial parameters 
\eqref{eq:model} for $\cS=\{1,2\}$
and arbitrary $T\ge 3$.  The basis consists of moves of degree at most three
and the types of moves  are common for all $T\ge 4$.
For the model \eqref{eq:model0} we had similar ``finiteness'' result
in \citet{th-homogeneous1} with a Markov basis consisting of moves of degree at most two.

Each fiber of \eqref{eq:model0} is a subset of a fiber in \eqref{eq:model}.
This corresponds to the fact that 
\eqref{eq:model} is a submodel of \eqref{eq:model0}, such that
the sufficient statistic for \eqref{eq:model} is a linear function of 
the sufficient statistic for \eqref{eq:model0}.
Type II degree one moves and the degree 3 sliding moves are needed to
connect fibers of \eqref{eq:model0} in each fiber of \eqref{eq:model}.
It is of interest to consider other nested toric statistical models
and identify moves which are needed to cross fibers of a larger model
within each fiber of a smaller model.

\bibliographystyle{plainnat}
\bibliography{Hara-Takemura-homogeneous}

\end{document}